\newtheorem{algorithm}{Weak Galerkin Algorithm}
\newcommand{\bq}{{\bf q}}
\newcommand{\bn}{{\bf n}}
\newcommand{\bx}{{\bf x}}
\def\T{{\mathcal T}}
\def\E{{\mathcal E}}
\def\Q{{\mathbb Q}}
\def\l{{\langle}}
\def\r{{\rangle}}
\def\bn{{\bf n}}
\def\bq{{\bf q}}
\newcommand{\pT}{{\partial T}}
\def\3bar{{|\hspace{-.02in}|\hspace{-.02in}|}}
\title{Stabilizer-free weak Galerkin finite element
   methods on polytopal meshes}
\author{Xiu Ye\thanks{Department of
Mathematics, University of Arkansas at Little Rock, Little Rock, AR
72204 (xxye@ualr.edu). This research was supported in part by
National Science Foundation Grant DMS-1620016.}
\and
Shangyou Zhang\thanks{Department of
Mathematical Sciences, University of Delaware, Newark, DE 19716 (szhang@udel.edu).}
}
\begin{document}

\maketitle

\begin{abstract}
A stabilizing/penalty term  is often used in finite element methods
    with discontinuous approximations  to enforce  connection of discontinuous functions
   across element boundaries.
Removing  stabilizers from discontinuous  Galerkin finite element methods will simplify
   formulations and reduce programming complexity significantly.
The goal of this paper is to introduce a stabilizer free weak Galerkin (WG) finite element method
    for second order elliptic equations on polytopal meshes.
This new WG method keeps a simple symmetric positive definite form and can work on polygonal/polyheral
     meshes. Optimal order error estimates are established for the corresponding
WG approximations in both a discrete $H^1$ norm and the  $L^2$ norm.
Numerical results are presented verifying the theorem.
\end{abstract}

\begin{keywords}
weak Galerkin, finite element methods, weak gradient, second-order
elliptic problems, polyhedral meshes
\end{keywords}

\begin{AMS}
Primary: 65N15, 65N30; Secondary: 35J50
\end{AMS}
\pagestyle{myheadings}

\section{Introduction}\label{Section:Introduction}

We consider Poisson equation with a homogeneous Dirichlet boundary condition in $d$ dimension
  as our model problem for the sake of clear presentation.
This stabilizer free weak Galerkin  method can also be used for
   other partial differential equations.
The Poisson problem  seeks an unknown function $u$ satisfying
\begin{eqnarray}
-\Delta u&=&f\quad \mbox{in}\;\Omega,\label{pde}\\
u&=&0\quad\mbox{on}\;\partial\Omega,\label{bc}
\end{eqnarray}
where $\Omega$ is a polytopal domain in $\mathbb{R}^d$.

The weak form of the problem (\ref{pde})-(\ref{bc}) is to find $u\in H^1_0(\Omega)$
such that
\begin{eqnarray}
(\nabla u,\nabla v)=(f,v)\quad \forall v\in
H_0^1(\Omega).\label{weakform}
\end{eqnarray}

The $H^1$ conforming finite element method for the problem (\ref{pde})-(\ref{bc}) keeps the same simple form as in (\ref{weakform}): find $u_h\in V_h\subset H^1_0(\Omega)$
such that
\begin{eqnarray}
(\nabla u_h,\nabla v)=(f,v)\quad \forall v\in V_h,\label{cfe}
\end{eqnarray}
where $V_h$ is a finite dimensional subspace of $H_0^1(\Omega)$.
The functions in $V_h$ are required to be continuous, which makes the
  classic  finite element formulation (\ref{cfe}) less flexible
   in element constructions and in mesh generations.
In contrast, finite element methods using discontinuous approximations
   have two advantages: 1. easy construction of high order elements
    and  avoiding constructing some special elements such as $C^1$ conforming elements;
  2. easy working on general meshes.
Therefore, discontinuous finite element methods are the most active research area
    in the context of finite element methods for the past two decades.
Discontinuous approximation was first used in finite element procedure as early as in
   1970s \cite{Babu73, DoDu76,ReHi73, Whee78}.
Local discontinuous Galerkin methods were introduced in \cite{cs1998}.
 Then a paper \cite{abcm} in 2002 provides a unified analysis of discontinuous Galerkin
     finite element methods for Poisson equation.
More discontinuous finite element methods  have been developed such as
   hybridizable discontinuous Galerkin  method \cite{cgl},
    mimetic finite differences method \cite{Lipnikov2011},
hybrid high-order   method \cite{de},
 weak Galerkin  method \cite{wy}  and references therein.

One obvious disadvantage of discontinuous finite element methods is their rather complex
   formulations which are often necessary to enforce weak continuity of discontinuous solutions
    across element boundaries.
Most of discontinuous finite element methods have one or more stabilizing terms
     to guarantee  stability and convergence of the methods.
Existing of stabilizing terms further complicates formulations.
Complexity of discontinuous finite element methods makes them difficult
   to be implemented and to be analyzed.
The purpose of this paper is to obtain a finite element
    formulation close to its original PDE weak form (\ref{weakform})
    for discontinuous polynomials.
We believe that finite element formulations for discontinuous approximations
     can be as simple as follows:
\begin{equation}\label{dfe}
(\nabla_w u_h,\nabla_w v)=(f,v),
\end{equation}
if  $\nabla_w$, an approximation of gradient,  is appropriately defined.
The formulation (\ref{dfe}) can be viewed as the counterpart of  (\ref{weakform}) for discontinuous approximations.
In fact such an ultra simple formulation (\ref{dfe}) has been achieved
     for one kind of WG method in \cite{wy},
    and for the conforming DG methods in \cite{cdg1,cdg2}.
 The lowest order WG method developed in \cite{wy} has been improved
    in \cite{liu} for convex polygonal meshes,
   in which non-polynomial functions are used for computing weak gradient.

In this paper, we develop a  WG finite element method that
    has an ultra simple formulation (\ref{dfe}) and can work on polytopal meshes for any polynomial degree $k\ge 1$.
The idea is to raise the degree of polynomials used to compute weak gradient $\nabla_w$.  Using higher degree polynomials in computation of weak gradient will not change
the size, neither the global sparsity of the stiffness matrix. On the other side, the simple formulation of the stabilizer free WG method (\ref{dfe}) will reduce programming complexity significantly.
Optimal order error estimates are established for the corresponding
   WG approximations in both a discrete $H^1$ norm and the  $L^2$ norm.
Numerical results are presented verifying the theorem.

\section{Weak Galerkin Finite Element Schemes}\label{Section:wg-fem}

Let ${\cal T}_h$ be a partition of the domain $\Omega$ consisting of
polygons in two dimension or polyhedra in three dimension satisfying
a set of conditions specified in \cite{wymix}. Denote by ${\cal E}_h$
the set of all edges or flat faces in ${\cal T}_h$, and let ${\cal
E}_h^0={\cal E}_h\backslash\partial\Omega$ be the set of all
interior edges or flat faces. For every element $T\in \T_h$, we
denote by $h_T$ its diameter and mesh size $h=\max_{T\in\T_h} h_T$
for ${\cal T}_h$.

We start by introducing weak function $v=\{v_0,v_b\}$ on element $T\in\T_h$ such that
$$
v=
\left\{
\begin{array}{l}
  \displaystyle
  v_0\quad {\rm in}\; T,
  \\ [0.08in]
  \displaystyle
  v_b\quad {\rm on}\;\partial T.
 \end{array}
\right.
$$
If $v$ is continuous on $\Omega$, then $v=\{v,v\}$.

For a given integer $k \ge 1$, let $V_h$ be the weak Galerkin finite
element space associated with $\T_h$ defined as follows
\begin{equation}\label{vhspace}
V_h=\{v=\{v_0,v_b\}:\; v_0|_T\in P_k(T),\ v_b|_e\in P_{k}(e),\ e\subset\pT,  T\in \T_h\}
\end{equation}
and its subspace $V_h^0$ is defined as
\begin{equation}\label{vh0space}
V^0_h=\{v: \ v\in V_h,\  v_b=0 \mbox{ on } \partial\Omega\}.
\end{equation}
We would like to emphasize that any function $v\in V_h$ has a single
value $v_b$ on each edge $e\in\E_h$.

For given $T\in\T_h$ and $v=\{v_0,v_b\}\in V_h+H^1(\Omega)$, a weak gradient $\nabla_w v \in [P_j(T)]^d$ ($j > k$) is defined as the unique polynomial satisfying
\begin{equation}\label{d-d}
  (\nabla_w v, \bq)_T = -(v_0, \nabla\cdot \bq)_T+ \langle v_b, \bq\cdot\bn\rangle_{\partial T}\qquad
   \forall \bq\in [P_j(T)]^d,
\end{equation}
where $j$ will be specified later.

Let $Q_0$ and $Q_b$ be the two element-wise defined $L^2$ projections onto $P_k(T)$ and $P_k(e)$ with $e\subset\partial T$ on $T$ respectively. Define $Q_hu=\{Q_0u,Q_bu\}\in V_h$. Let $\Q_h$ be the element-wise defined $L^2$ projection onto $[P_{j}(T)]^d$ on each element $T$.

For simplicity, we adopt the following notations,
\begin{eqnarray*}
(v,w)_{\T_h} &=&\sum_{T\in\T_h}(v,w)_T=\sum_{T\in\T_h}\int_T vw d\bx,\\
 \l v,w\r_{\partial\T_h}&=&\sum_{T\in\T_h} \l v,w\r_\pT=\sum_{T\in\T_h} \int_\pT vw ds.
\end{eqnarray*}

\begin{algorithm}
A numerical approximation for (\ref{pde})-(\ref{bc}) can be
obtained by seeking $u_h=\{u_0,u_b\}\in V_h^0$
satisfying  the following equation:
\begin{equation}\label{wg}
(\nabla_wu_h,\nabla_wv)_{\T_h}=(f,\; v_0) \quad\forall v=\{v_0,v_b\}\in V_h^0.
\end{equation}
\end{algorithm}

\begin{lemma}
Let $\phi\in H^1(\Omega)$, then on any $T\in\T_h$,
\begin{equation}\label{key}
\nabla_w\phi =\Q_h\nabla\phi.
\end{equation}
\end{lemma}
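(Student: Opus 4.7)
The plan is to verify the identity directly from the definition \eqref{d-d} of the weak gradient, using the defining property of the $L^2$ projection $\Q_h$ and integration by parts. Since $\nabla_w\phi$ is characterized as the \emph{unique} element of $[P_j(T)]^d$ satisfying \eqref{d-d}, it suffices to show that $\Q_h\nabla\phi$ satisfies the same identity; uniqueness then gives equality.

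First I would fix an arbitrary test function $\bq\in[P_j(T)]^d$ and compute $(\Q_h\nabla\phi,\bq)_T$. Because $\Q_h$ is the $L^2$ projection onto $[P_j(T)]^d$ and $\bq$ lies in the target space, we have $(\Q_h\nabla\phi,\bq)_T=(\nabla\phi,\bq)_T$. Next, since $\phi\in H^1(\Omega)$ and $\bq$ is a polynomial (hence smooth) on $T$, integration by parts on $T$ yields
\begin{equation*}
(\nabla\phi,\bq)_T \;=\; -(\phi,\nabla\cdot\bq)_T + \langle \phi,\bq\cdot\bn\rangle_{\partial T}.
\end{equation*}
Here the boundary term uses the trace of $\phi$ on $\partial T$, which is exactly the $v_b$ component when $\phi\in H^1(\Omega)$ is regarded as the weak function $\{\phi|_T,\phi|_{\partial T}\}$, and the $v_0$ component is $\phi|_T$.

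Comparing the resulting identity with \eqref{d-d} shows that $\Q_h\nabla\phi$ satisfies the same variational equation as $\nabla_w\phi$ against every $\bq\in[P_j(T)]^d$. Since both quantities lie in $[P_j(T)]^d$, and the bilinear form $(\cdot,\cdot)_T$ is an inner product on that finite-dimensional space, uniqueness forces $\nabla_w\phi=\Q_h\nabla\phi$ on $T$. There is no real obstacle here: the only subtle point worth stating explicitly is that $H^1$-regularity is precisely what is needed to legitimize the integration by parts and to give a well-defined trace $\phi|_{\partial T}$ playing the role of $v_b$.
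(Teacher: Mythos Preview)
Your proof is correct and follows essentially the same route as the paper: apply integration by parts to convert the right-hand side of \eqref{d-d} into $(\nabla\phi,\bq)_T$, then use the defining property of the $L^2$ projection $\Q_h$ to identify this with $(\Q_h\nabla\phi,\bq)_T$, and conclude by uniqueness in $[P_j(T)]^d$. The only cosmetic difference is that the paper runs the chain starting from $(\nabla_w\phi,\bq)_T$ rather than from $(\Q_h\nabla\phi,\bq)_T$.
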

\begin{proof}
Using (\ref{d-d}) and  integration by parts, we have that for
any $\bq\in [P_{j}(T)]^d$
\begin{eqnarray*}
(\nabla_w \phi,\bq)_T &=& -(\phi,\nabla\cdot\bq)_T
+\langle \phi,\bq\cdot\bn\rangle_{\pT}\\
&=&(\nabla \phi,\bq)_T=(\Q_h\nabla\phi,\bq)_T,
\end{eqnarray*}
which implies the desired identity (\ref{key}).
\end{proof}

\section{Well Posedness}

For any $v\in V_h+H^1(\Omega)$, let
\begin{equation}\label{3barnorm}
\3bar v\3bar^2=(\nabla_wv,\nabla_wv)_{\T_h}.
\end{equation}

We introduce a discrete $H^1$ semi-norm as follows:
\begin{equation}\label{norm}
\|v\|_{1,h} = \left( \sum_{T\in\T_h}\left(\|\nabla
v_0\|_T^2+h_T^{-1} \|  v_0-v_b\|^2_\pT\right) \right)^{\frac12}.
\end{equation}
It is easy to see that $\|v\|_{1,h}$ define a norm in $V_h^0$. The following lemma indicates that $\|\cdot\|_{1,h}$ is equivalent
to the $\3bar\cdot\3bar$ in (\ref{3barnorm}).

\begin{lemma} There exist two positive constants $C_1$ and $C_2$ such
that for any $v=\{v_0,v_b\}\in V_h$, we have
\begin{equation}\label{happy}
C_1 \|v\|_{1,h}\le \3bar v\3bar \leq C_2 \|v\|_{1,h}.
\end{equation}
\end{lemma}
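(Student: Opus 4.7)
The plan is to prove the two inequalities separately, and both use the same rewrite of (\ref{d-d}) via Green's formula applied to $-(v_0,\nabla\cdot\bq)_T$:
\begin{equation*}
(\nabla_w v,\bq)_T = (\nabla v_0,\bq)_T + \langle v_b-v_0,\bq\cdot\bn\rangle_{\partial T}
\qquad\forall\,\bq\in[P_j(T)]^d.
\end{equation*}

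For the upper bound $\3bar v\3bar\le C_2\|v\|_{1,h}$ I would substitute $\bq=\nabla_w v$, apply Cauchy--Schwarz to both terms, use the polynomial trace/inverse estimate $\|\bq\cdot\bn\|_{\partial T}\le Ch_T^{-1/2}\|\bq\|_T$ permitted by the shape regularity assumed in \cite{wymix}, divide by $\|\nabla_w v\|_T$, square, and sum over $T\in\T_h$. This is the straightforward direction.

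The lower bound $C_1\|v\|_{1,h}\le\3bar v\3bar$ is the main obstacle, and I would attack the two contributions to $\|v\|_{1,h}$ separately. For the volume piece, take $\bq=\nabla v_0\in[P_{k-1}(T)]^d\subset[P_j(T)]^d$ in the identity above to get, after the same trace inequality,
\begin{equation*}
\|\nabla v_0\|_T \;\le\; \|\nabla_w v\|_T \;+\; C_0\,h_T^{-1/2}\|v_0-v_b\|_{\partial T}.
\end{equation*}
For the jump piece, the naive choice of $\bq$ satisfying only $\bq\cdot\bn=v_b-v_0$ on $\partial T$ produces an estimate still coupled to $\|\nabla v_0\|_T$, and the coupled system cannot be decoupled by generic absorbing arguments. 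The clean fix is to demand an extra orthogonality: choose $\bq\in[P_j(T)]^d$ with $\bq\cdot\bn=v_b-v_0$ on $\partial T$, $(\bq,\nabla\psi)_T=0$ for every $\psi\in P_k(T)$, and $\|\bq\|_T\le C h_T^{1/2}\|v_0-v_b\|_{\partial T}$. Since $v_0\in P_k(T)$, the orthogonality kills the $(\nabla v_0,\bq)_T$ term and the identity collapses to
\begin{equation*}
\|v_0-v_b\|_{\partial T}^2 \;=\; (\nabla_w v,\bq)_T \;\le\; \|\nabla_w v\|_T\,\|\bq\|_T \;\le\; C\,h_T^{1/2}\,\|\nabla_w v\|_T\,\|v_0-v_b\|_{\partial T},
\end{equation*}
giving $h_T^{-1/2}\|v_0-v_b\|_{\partial T}\le C\|\nabla_w v\|_T$ uncoupled from $\|\nabla v_0\|_T$. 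Feeding this back into the volume estimate, squaring, and summing over $T$ then delivers $C_1$.

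The hard part is the construction of this special test field. I would isolate the statement as an independent technical lemma: for each $T$, the linear map $\bq\mapsto\bigl(\bq\cdot\bn|_{\partial T},\,(\bq,\nabla\cdot)_T|_{P_k(T)}\bigr)$ from $[P_j(T)]^d$ into traces and interior moments must be surjective with a bounded right inverse, the bound depending only on the shape regularity of $T$. This is a BDM-style lifting on a polytopal element and is precisely what forces the paper's requirement $j>k$: $j$ must be chosen large enough that the normal-trace data on every face of $T$ together with the orthogonality against $\nabla P_k(T)$ can be simultaneously realized, with scaling $\|\bq\|_T\lesssim h_T^{1/2}\|v_b-v_0\|_{\partial T}$. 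Once this lemma is granted the rest of the proof is the short two-line combination above.
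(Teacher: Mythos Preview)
Your proposal is correct and follows essentially the same route as the paper: both directions use the identity $(\nabla_w v,\bq)_T=(\nabla v_0,\bq)_T+\langle v_b-v_0,\bq\cdot\bn\rangle_{\partial T}$, with $\bq=\nabla_w v$ for the upper bound and a specially constructed $\bq$ (orthogonal to $\nabla v_0$, prescribed boundary behavior, scaling $\|\bq\|_T\lesssim h_T^{1/2}\|v_0-v_b\|_{\partial T}$) for the lower bound, with the construction deferred to a separate technical lemma that dictates how large $j$ must be. The only cosmetic difference is that the paper builds the test field one face $e$ at a time (citing \cite{cdg2}, with $j=n+k-1$ where $n$ is the number of faces of $T$) and imposes only the weak conditions $(\nabla v_0,\bq_0)_T=0$, $\langle v_b-v_0,\bq_0\cdot\bn\rangle_{\partial T\setminus e}=0$, $\langle v_b-v_0,\bq_0\cdot\bn\rangle_e=\|v_0-v_b\|_e^2$, rather than your stronger pointwise condition $\bq\cdot\bn=v_b-v_0$ on all of $\partial T$.
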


\medskip

\begin{proof}
For any $v=\{v_0,v_b\}\in V_h$, it follows from the definition of
weak gradient (\ref{d-d}) and integration by parts that
\begin{eqnarray}\label{n-1}
(\nabla_wv,\bq)_T=(\nabla v_0,\bq)_T+\l v_b-v_0,
\bq\cdot\bn\r_\pT,\quad \forall \bq\in [P_{j}(T)]^d.
\end{eqnarray}
By letting $\bq=\nabla_w v$ in (\ref{n-1}) we arrive at
\begin{eqnarray*}
(\nabla_wv,\nabla_w v)_T=(\nabla v_0,\nabla_w v)_T+\l v_b-v_0,
\nabla_w v\cdot\bn\r_\pT.
\end{eqnarray*}
From the trace inequality (\ref{trace}) and the inverse inequality
we have
\begin{eqnarray*}
\|\nabla_wv\|^2_T &\le& \|\nabla v_0\|_T \|\nabla_w v\|_T+ \|
v_0-v_b\|_\pT \|\nabla_w v\|_\pT\\
&\le& \|\nabla v_0\|_T \|\nabla_w v\|_T+ Ch_T^{-1/2}\|
v_0-v_b\|_\pT \|\nabla_w v\|_T,
\end{eqnarray*}
which implies
$$
\|\nabla_w v\|_T \le C \left(\|\nabla v_0\|_T +h_T^{-1/2}\|v_0-v_b\|_\pT\right),
$$
and consequently
$$\3bar v\3bar \leq C_2 \|v\|_{1,h}.$$

Next we will prove $C_1 \|v\|_{1,h}\le \3bar v\3bar $.
For $v\in V_h$ and $\bq\in [P_j(T)]^d$, by  \eqref{d-d} and integration by parts, we have
\begin{equation}\label{n2}
   (\nabla_w v,\bq)_T=(\nabla v_0,\bq)_T+\l  v_b-v_0, \bq\cdot\bn\r_\pT.
\end{equation}
Let $n$ be the number of the edges/faces on a polygon/polyhadron. It has been proved in \cite{cdg2} that there exists $\bq_0\in [P_j(T)]^d$, $j=n+k-1$, such that
\begin{equation}\label{2e}
(\nabla v_0,\bq_0)_T=0, \ \ \l v_b-v_0,\bq_0\cdot\bn\r_{\pT\setminus e}=0,
    \ \ \l v_b-v_0, \bq_0\cdot\bn\r_e=\|v_0-v_b\|_e^2,
\end{equation}
and
\begin{equation}
\|\bq_0\|_T \le C h_T^{1/2} \| v_b-v_0 \|_e.\label{22e}
\end{equation}
Substituting  $\bq_0$ into (\ref{n2}), we get
\begin{equation}\label{n3}
(\nabla_wv,\bq_0)_T=\|v_b-v_0\|^2_e.
\end{equation}
It follows from Cauchy-Schwarz inequality and (\ref{22e}) that
\[
\|v_b-v_0\|^2_e\le C\|\nabla_w v\|_T\|\bq_0\|_T
 \le Ch_T^{1/2}\|\nabla_w v\|_T\|v_0-v_b\|_e,
\]
which implies
\begin{equation}\label{n4}
h_T^{-1/2}\|v_0-v_b\|_\pT\le C\|\nabla_w v\|_T.
\end{equation}
It follows from the trace inequality, the inverse inequality and (\ref{n4}),
$$
\|\nabla v_0\|_T^2 \leq \|\nabla_w v\|_T \|\nabla v_0\|_T
+Ch_T^{-1/2}\| v_0-v_b\|_\pT \|\nabla v_0\|_T\le C\|\nabla_w v\|_T \|\nabla v_0\|_T.
$$
Combining the above estimate and (\ref{n4}),
   by the definition \eqref{norm},
 we prove  the lower bound of (\ref{happy}) and complete the proof of the lemma.
\end{proof}

\medskip

\begin{lemma}
The weak Galerkin finite element scheme (\ref{wg}) has a unique
solution.
\end{lemma}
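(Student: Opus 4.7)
The plan is to exploit that (\ref{wg}) is a square linear system on the finite-dimensional space $V_h^0$, so existence and uniqueness are equivalent, and it suffices to prove that the only solution corresponding to $f=0$ is $u_h=0$.

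The first step is to set $f=0$ and choose the test function $v=u_h$ in (\ref{wg}). This gives $(\nabla_w u_h,\nabla_w u_h)_{\T_h}=0$, i.e.\ $\3bar u_h\3bar=0$. Applying the lower bound in the norm equivalence (\ref{happy}) established in the previous lemma, I get $\|u_h\|_{1,h}=0$, so from the definition (\ref{norm}) both $\nabla v_0=0$ on each $T$ and $v_0=v_b$ on each $\partial T$, where I write $u_h=\{v_0,v_b\}$.

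The second step is to conclude $u_h=0$ using the boundary condition built into $V_h^0$. Since $\nabla v_0=0$ on each element, $v_0$ is a constant on each $T$; since $v_0=v_b$ on each edge, the single-valued trace $v_b$ forces $v_0$ to take the same constant value on neighboring elements, so $v_0$ is constant globally and $v_b$ is constant on $\E_h$. The condition $v_b=0$ on $\partial\Omega$ (from the definition of $V_h^0$) then pins that constant to zero, giving $u_h=0$. Equivalently, this is the statement already noted in the paper that $\|\cdot\|_{1,h}$ is a norm on $V_h^0$.

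The only step that requires any care is verifying that $\|\cdot\|_{1,h}$ truly separates points on $V_h^0$, i.e., that the homogeneous boundary datum $v_b=0$ on $\partial\Omega$ rules out nonzero constant solutions; this is exactly the role played by the boundary condition and is the closest thing to an obstacle, but it is routine. With uniqueness established on the finite-dimensional space $V_h^0$, existence of a solution for every $f$ follows immediately from the rank-nullity theorem applied to the square linear system.
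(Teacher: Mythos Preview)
Your proof is correct and follows essentially the same approach as the paper: both reduce to showing $\3bar\cdot\3bar$ vanishes on the difference/solution, invoke the norm equivalence (\ref{happy}), and use that $\|\cdot\|_{1,h}$ is a norm on $V_h^0$. The only cosmetic difference is that the paper takes the difference of two solutions while you argue via the kernel of the square linear system; you also spell out explicitly why $\|\cdot\|_{1,h}$ separates points on $V_h^0$, which the paper simply asserts.
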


\smallskip

\begin{proof}
If $u_h^{(1)}$ and $u_h^{(2)}$ are two solutions of (\ref{wg}), then
$\varepsilon_h=u_h^{(1)}-u_h^{(2)}\in V_h^0$ would satisfy the following equation
$$
(\nabla_w \varepsilon_h,\nabla_w v)=0,\qquad\forall v\in V_h^0.
$$
 Then by letting $v=\varepsilon_h$ in the above
equation we arrive at
$$
\3bar \varepsilon_h\3bar^2 = (\nabla_w \varepsilon_h,\nabla_w \varepsilon_h)=0.
$$
It follows from (\ref{happy}) that $\|\varepsilon_h\|_{1,h}=0$. Since $\|\cdot\|_{1,h}$ is a norm in $V_h^0$, one has $\varepsilon_h=0$.
 This completes the proof of the lemma.
\end{proof}

\section{Error Estimates in Energy Norm}

Let $e_h=u-u_h$ and $\epsilon_h=Q_hu-u_h$. Next we derive an error equation that $e_h$ satisfies.

\begin{lemma}
For any $v\in V_h^0$, the following error equation holds true
\begin{eqnarray}
(\nabla_we_h,\nabla_wv)_{\T_h}=\ell(u,v),\label{ee}
\end{eqnarray}
where
\begin{eqnarray*}
\ell(u,v)&=& \langle (\nabla u-\Q_h\nabla u)\cdot\bn,v_0-v_b\rangle_{\pT_h}.
\end{eqnarray*}
\end{lemma}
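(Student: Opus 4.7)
The plan is to start from the definition $e_h=u-u_h$ and reduce the left-hand side to two pieces: one involving $u_h$, which by the scheme \eqref{wg} produces $(f,v_0)$, and one involving $u$, which I will massage until the boundary consistency term $\langle(\nabla u-\Q_h\nabla u)\cdot\bn,v_0-v_b\rangle_{\partial\T_h}$ emerges. The crucial tool is the commutativity lemma \eqref{key}, which gives $\nabla_w u=\Q_h\nabla u$ because $u\in H^1(\Omega)\cap H^2(\Omega)$; this is what lets the weak gradient of the continuous solution talk to $\nabla u$ through the projection $\Q_h$.

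Concretely, I would first write
\[
(\nabla_w e_h,\nabla_w v)_{\T_h}
=(\Q_h\nabla u,\nabla_w v)_{\T_h}-(\nabla_w u_h,\nabla_w v)_{\T_h}
=(\Q_h\nabla u,\nabla_w v)_{\T_h}-(f,v_0).
\]
Then, for each $T\in\T_h$, plug $\bq=\Q_h\nabla u\in[P_j(T)]^d$ into the definition \eqref{d-d} of $\nabla_w v$ and integrate by parts on the resulting volume term:
\[
(\Q_h\nabla u,\nabla_w v)_T
=-(v_0,\nabla\cdot\Q_h\nabla u)_T+\langle v_b,\Q_h\nabla u\cdot\bn\rangle_{\partial T}
=(\nabla v_0,\Q_h\nabla u)_T+\langle v_b-v_0,\Q_h\nabla u\cdot\bn\rangle_{\partial T}.
\]
Since $\nabla v_0\in[P_{k-1}(T)]^d\subset[P_j(T)]^d$, the projection $\Q_h$ is invisible against $\nabla v_0$ and $(\nabla v_0,\Q_h\nabla u)_T=(\nabla v_0,\nabla u)_T$.

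Next I would integrate by parts the other way on $(\nabla v_0,\nabla u)_T$ to bring in the PDE: summing over $T$ and using $-\Delta u=f$,
\[
\sum_{T\in\T_h}(\nabla v_0,\nabla u)_T=(f,v_0)+\langle\nabla u\cdot\bn,v_0\rangle_{\partial\T_h}
=(f,v_0)+\langle\nabla u\cdot\bn,v_0-v_b\rangle_{\partial\T_h},
\]
where the last equality uses that $\nabla u\cdot\bn$ is single-valued across interior edges while $v_b$ is single-valued (so its two contributions cancel) and that $v_b=0$ on $\partial\Omega$. Combining these identities, the $(f,v_0)$ terms cancel and the two boundary terms assemble into $\langle(\nabla u-\Q_h\nabla u)\cdot\bn,v_0-v_b\rangle_{\partial\T_h}$, which is exactly $\ell(u,v)$.

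The only delicate step is the bookkeeping of boundary terms: I must be careful that the term $\langle\nabla u\cdot\bn,v_b\rangle_{\partial\T_h}$ really vanishes (it does, by single-valuedness of the normal trace of $\nabla u$ plus $v\in V_h^0$), and that the projection $\Q_h$ may be dropped in $(\nabla v_0,\Q_h\nabla u)_T$ (it may, since $j>k$). Once these are in hand, the identity falls out by direct assembly with no estimates needed.
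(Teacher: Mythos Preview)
Your proof is correct and follows essentially the same route as the paper. The only cosmetic difference is the order of operations: the paper first tests the PDE by $v_0$ to obtain \eqref{m1}, then proves the identity \eqref{j1} linking $(\nabla u,\nabla v_0)_{\T_h}$ to $(\nabla_w u,\nabla_w v)_{\T_h}$, and finally subtracts the scheme \eqref{wg}; you instead subtract the scheme at the outset and then derive the same identity (your version of \eqref{j1}) in the reverse direction. The ingredients --- \eqref{key}, \eqref{d-d} with $\bq=\Q_h\nabla u$, integration by parts, the projection property $(\nabla v_0,\Q_h\nabla u)_T=(\nabla v_0,\nabla u)_T$, and the vanishing of $\langle\nabla u\cdot\bn,v_b\rangle_{\partial\T_h}$ --- are identical. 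One small remark: the commutativity \eqref{key} only needs $u\in H^1(\Omega)$, not $H^2$; the extra regularity is what justifies the elementwise integration by parts against $-\Delta u$.
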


\begin{proof}
For $v=\{v_0,v_b\}\in V_h^0$, testing (\ref{pde}) by  $v_0$  and using the fact that
$\sum_{T\in\T_h}\langle \nabla u\cdot\bn, v_b\rangle_\pT=0$,  we arrive at
\begin{equation}\label{m1}
(\nabla u,\nabla v_0)_{\T_h}- \langle
\nabla u\cdot\bn,v_0-v_b\rangle_{\pT_h}=(f,v_0).
\end{equation}

It follows from integration by parts, (\ref{d-d}) and (\ref{key})  that
\begin{eqnarray}
(\nabla u,\nabla v_0)_{\T_h}&=&(\Q_h\nabla  u,\nabla v_0)_{\T_h}\nonumber\\
&=&-(v_0,\nabla\cdot (\Q_h\nabla u))_{\T_h}+\langle v_0, \Q_h\nabla u\cdot\bn\rangle_{\partial\T_h}\nonumber\\
&=&(\Q_h\nabla u, \nabla_w v)_{\T_h}+\langle v_0-v_b,\Q_h\nabla u\cdot\bn\rangle_{\partial\T_h}\nonumber\\
&=&( \nabla_w u, \nabla_w v)_{\T_h}+\langle v_0-v_b,\Q_h\nabla u\cdot\bn\rangle_{\partial\T_h}.\label{j1}
\end{eqnarray}
Combining (\ref{m1}) and (\ref{j1}) gives
\begin{eqnarray}
(\nabla_w u,\nabla_w v)_{\T_h}&=&(f,v_0)+\ell(u,v).\label{j2}
\end{eqnarray}
The error equation follows from subtracting (\ref{wg}) from (\ref{j2}),
\begin{eqnarray*}
(\nabla_we_h,\nabla_wv)_{\T_h}=\ell(u,v),\quad \forall v\in V_h^0.
\end{eqnarray*}
This completes the proof of the lemma.
\end{proof}

For any function $\varphi\in H^1(T)$, the following trace
inequality holds true (see \cite{wymix} for details):
\begin{equation}\label{trace}
\|\varphi\|_{e}^2 \leq C \left( h_T^{-1} \|\varphi\|_T^2 + h_T
\|\nabla \varphi\|_{T}^2\right).
\end{equation}

\begin{lemma} For any $w\in H^{k+1}(\Omega)$ and
$v=\{v_0,v_b\}\in V_h^0$, we have
\begin{eqnarray}
|\ell(w, v)|&\le&
Ch^{k}|w|_{k+1}\3bar v\3bar.\label{mmm1}
\end{eqnarray}
\end{lemma}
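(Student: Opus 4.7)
The plan is to bound the face integral $\ell(w,v)$ by a face-wise Cauchy--Schwarz on $\partial\T_h$, then control the projection-error factor $\nabla w-\Q_h\nabla w$ using the trace inequality (\ref{trace}) together with standard $L^2$-projection approximation estimates, and absorb the remaining jump factor into the seminorm $\|v\|_{1,h}$ and ultimately into $\3bar v\3bar$ via (\ref{happy}).

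First I would write
\begin{equation*}
|\ell(w,v)| \le \sum_{T\in\T_h}\|(\nabla w-\Q_h\nabla w)\cdot\bn\|_{\partial T}\,\|v_0-v_b\|_{\partial T}.
\end{equation*}
Since $\Q_h$ is the element-wise $L^2$ projection onto $[P_j(T)]^d$ with $j\ge k$, standard approximation theory applied to $\nabla w\in [H^k(T)]^d$ gives both $\|\nabla w-\Q_h\nabla w\|_T\le Ch_T^{k}|w|_{k+1,T}$ and a corresponding $H^1$-seminorm bound of order $h_T^{k-1}|w|_{k+1,T}$. Inserting both into the trace inequality (\ref{trace}) produces the clean face estimate
\begin{equation*}
\|(\nabla w-\Q_h\nabla w)\cdot\bn\|_{\partial T}\le Ch_T^{k-1/2}|w|_{k+1,T}.
\end{equation*}

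I would then split the factor $h_T^{k-1/2}$ as $h_T^{k}\cdot h_T^{-1/2}$, pair the $h_T^{-1/2}$ with $\|v_0-v_b\|_{\partial T}$, and apply a discrete Cauchy--Schwarz across elements. The resulting sum $\sum_T h_T^{-1}\|v_0-v_b\|_{\partial T}^2$ is precisely the boundary part of $\|v\|_{1,h}^2$ in (\ref{norm}), so this step yields $|\ell(w,v)|\le Ch^{k}|w|_{k+1}\|v\|_{1,h}$. Finally, invoking the equivalence (\ref{happy}) replaces $\|v\|_{1,h}$ with $\3bar v\3bar$ and delivers the claimed bound.

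I do not anticipate a substantive obstacle. The only item requiring care is the $h_T$-bookkeeping inside the trace inequality, where the $L^2$ and $H^1$ approximation orders for $\Q_h\nabla w$ must combine to give the factor $h_T^{k-1/2}$ exactly; once that is in place, the remainder is a routine Cauchy--Schwarz followed by a direct appeal to the norm equivalence already established in (\ref{happy}).
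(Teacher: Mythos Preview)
Your proposal is correct and follows essentially the same route as the paper: face-wise Cauchy--Schwarz, the trace inequality (\ref{trace}) combined with the approximation properties of $\Q_h$ to produce the factor $h_T^{k-1/2}$, a discrete Cauchy--Schwarz that assembles $\sum_T h_T^{-1}\|v_0-v_b\|_{\partial T}^2$, and finally the norm equivalence (\ref{happy}). The only cosmetic difference is that you make the $H^1$-seminorm approximation bound for $\Q_h\nabla w$ explicit, whereas the paper absorbs it silently into the trace-inequality step.
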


\medskip

\begin{proof}
Using the Cauchy-Schwarz inequality, the trace inequality (\ref{trace}) and (\ref{happy}), we have
\begin{eqnarray*}
|\ell(w,v)|&=&\left|\sum_{T\in\T_h}\langle (\nabla w-\Q_h\nabla
w)\cdot\bn, v_0-v_b\rangle_\pT\right|\\
&\le & C \sum_{T\in\T_h}\|(\nabla w-\Q_h\nabla w)\|_{\pT}
\|v_0-v_b\|_\pT\nonumber\\
&\le & C \left(\sum_{T\in\T_h}h_T\|(\nabla w-\Q_h\nabla w)\|_{\pT}^2\right)^{\frac12}
\left(\sum_{T\in\T_h}h_T^{-1}\|v_0-v_b\|_\pT^2\right)^{\frac12}\\
&\le & Ch^{k}|w|_{k+1}\3bar v\3bar,
\end{eqnarray*}
which proves the lemma.
\end{proof}

\smallskip

\begin{lemma}
Let $w\in H^{k+1}(\Omega)$, then
\begin{equation}\label{eee2}
\3bar w-Q_hw\3bar\le Ch^k|w|_{k+1}.
\end{equation}
\end{lemma}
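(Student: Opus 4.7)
The plan is to expand $\3bar w - Q_h w\3bar$ element by element using the definition of the weak gradient, reduce it to standard $L^2$ projection error estimates, and then sum over $T \in \T_h$. The key observation is that although $w - Q_h w$ is not itself in $V_h$, the weak gradient and its defining identity (\ref{d-d}) make perfect sense for $w \in H^1(T)$.

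First, I would apply (\ref{d-d}) to $w - Q_h w = \{w - Q_0 w, w - Q_b w\}$ and then integrate by parts on the volume term, obtaining, for every $\bq \in [P_j(T)]^d$,
\begin{equation*}
(\nabla_w(w - Q_h w), \bq)_T = (\nabla(w - Q_0 w), \bq)_T + \langle Q_0 w - Q_b w, \bq\cdot\bn\rangle_{\pT}.
\end{equation*}
Setting $\bq = \nabla_w(w - Q_h w)$ and applying Cauchy--Schwarz, together with the inverse/trace inequality on the polynomial $\nabla_w(w - Q_h w)\in [P_j(T)]^d$ to convert $\|\cdot\|_\pT$ into $h_T^{-1/2}\|\cdot\|_T$, yields the bound
\begin{equation*}
\|\nabla_w(w - Q_h w)\|_T \le \|\nabla(w - Q_0 w)\|_T + C h_T^{-1/2}\|Q_0 w - Q_b w\|_\pT.
\end{equation*}

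Next I would use standard projection estimates. The first term satisfies $\|\nabla(w - Q_0 w)\|_T \le C h_T^k |w|_{k+1,T}$. For the second term I split $Q_0 w - Q_b w = (Q_0 w - w) + (w - Q_b w)$ and estimate each piece: the trace inequality (\ref{trace}) applied to $w - Q_0 w$ gives $\|w - Q_0 w\|_\pT \le C h_T^{k+1/2}|w|_{k+1,T}$, while the approximation property of $Q_b$ on edges yields $\|w - Q_b w\|_\pT \le C h_T^{k+1/2}|w|_{k+1,T}$. Therefore $h_T^{-1/2}\|Q_0 w - Q_b w\|_\pT \le C h_T^k |w|_{k+1,T}$, so squaring and summing over $T \in \T_h$ yields the desired estimate (\ref{eee2}).

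I do not foresee any real obstacle here; the only subtlety is applying (\ref{d-d})/(\ref{n-1}) to the non-polynomial weak function $w - Q_h w$, which is valid because the definition of $\nabla_w$ only requires the components to be in $H^1(T)$ and $L^2(\pT)$. The inverse inequality step works because the test field $\bq = \nabla_w(w - Q_h w)$ is a polynomial of degree $j$ on $T$, so the standard polynomial inverse/trace estimate applies on the polytopal element under the shape-regularity assumptions of \cite{wymix}.
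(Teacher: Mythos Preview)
Your proof is correct and follows essentially the same route as the paper: apply the definition of $\nabla_w$ to $w-Q_hw$, integrate by parts to obtain $(\nabla(w-Q_0w),\bq)_T+\langle Q_0w-Q_bw,\bq\cdot\bn\rangle_{\pT}$, then take $\bq=\nabla_w(w-Q_hw)$ and invoke the trace/inverse inequality and projection estimates. The only cosmetic difference is that the paper bounds $\|Q_0w-Q_bw\|_{\pT}$ directly by $\|w-Q_0w\|_{\pT}$ (using that $Q_b$ is the $L^2(e)$ projection and $Q_0w|_e\in P_k(e)$), whereas you split it as $(Q_0w-w)+(w-Q_bw)$; both lead to the same $h_T^{k+1/2}$ bound.
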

\begin{proof}
It follows from (\ref{d-d}), integration  by parts, and (\ref{trace}),
\begin{eqnarray*}
(\nabla_w(w-Q_hw), \bq)_{T}&=&-(w-Q_0w, \nabla\cdot\bq)_{T}+\l w-Q_bw, \bq\cdot\bn\r_{\pT}\\
&=&(\nabla (w-Q_0w), \bq)_{T}+\l Q_0w-Q_bw, \bq\cdot\bn\r_{\pT}\\
&\le& \|\nabla (w-Q_0w)\|_T\|\bq\|_T+Ch^{-1/2}\|w-Q_0w\|_\pT\|\bq\|_T\\
&\le& Ch^k|w|_{k+1, T}\|\bq\|_T.
\end{eqnarray*}
Letting $\bq=\nabla_w(w-Q_hw)$ in the above equation and taking summation over $T$, we have
\[
\3bar w-Q_hw\3bar\le Ch^k|w|_{k+1}.
\]
We have proved the lemma.
\end{proof}

\begin{theorem} Let $u_h\in V_h$ be the weak Galerkin finite element solution of (\ref{wg}). Assume the exact solution $u\in H^{k+1}(\Omega)$. Then,
there exists a constant $C$ such that
\begin{equation}\label{err1}
\3bar u-u_h\3bar \le Ch^{k}|u|_{k+1}.
\end{equation}
\end{theorem}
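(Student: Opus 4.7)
The plan is a standard Galerkin-type energy argument using the triangle inequality together with the three preceding lemmas. I would introduce the auxiliary error $\epsilon_h = Q_h u - u_h \in V_h^0$ and write
\[
\3bar u - u_h \3bar \;\le\; \3bar u - Q_h u \3bar \;+\; \3bar \epsilon_h \3bar.
\]
The first term is immediately bounded by $Ch^k|u|_{k+1}$ thanks to the projection estimate \eqref{eee2}, so the whole task reduces to estimating $\3bar \epsilon_h \3bar$.

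To control $\3bar \epsilon_h \3bar$, I would use the error equation \eqref{ee}. Since $\epsilon_h = (Q_h u - u) + (u - u_h) = (Q_h u - u) + e_h$, for every $v \in V_h^0$ we have
\[
(\nabla_w \epsilon_h, \nabla_w v)_{\T_h} = (\nabla_w(Q_h u - u), \nabla_w v)_{\T_h} + \ell(u,v).
\]
Now choose $v = \epsilon_h \in V_h^0$. Applying the Cauchy-Schwarz inequality to the first term on the right and the estimate \eqref{mmm1} to the second yields
\[
\3bar \epsilon_h \3bar^2 \;\le\; \3bar Q_h u - u \3bar \,\3bar \epsilon_h \3bar \;+\; C h^k |u|_{k+1} \3bar \epsilon_h \3bar.
\]
Using \eqref{eee2} on the first factor and dividing by $\3bar \epsilon_h \3bar$ gives $\3bar \epsilon_h \3bar \le C h^k |u|_{k+1}$, and the triangle inequality then finishes the proof.

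There is essentially no obstacle here since all the technical work has already been carried out in the three preceding lemmas: the error equation supplies the consistency identity, \eqref{mmm1} handles the consistency error $\ell(u,v)$, and \eqref{eee2} handles the projection error. The only point that requires a moment of care is the correct decomposition $\epsilon_h = (Q_h u - u) + e_h$ so that the right-hand side of the error equation picks up both a projection-type term and the $\ell(u,\cdot)$ term; both are of order $h^k|u|_{k+1}$, so no loss of order occurs. The norm equivalence \eqref{happy} is not needed for this theorem but will be essential later if an estimate in $\|\cdot\|_{1,h}$ is desired.
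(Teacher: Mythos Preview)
Your proof is correct and uses the same three lemmas (the error equation \eqref{ee}, the consistency bound \eqref{mmm1}, and the projection estimate \eqref{eee2}) with the same test function $v=\epsilon_h$ as the paper. The only difference is organizational: you split via the triangle inequality and bound $\3bar\epsilon_h\3bar$ directly, whereas the paper expands $\3bar e_h\3bar^2=(\nabla_w e_h,\nabla_w\epsilon_h)_{\T_h}+(\nabla_w(u-Q_hu),\nabla_w e_h)_{\T_h}$ and absorbs $\tfrac14\3bar e_h\3bar^2$ terms via Young's inequality; your route is slightly cleaner but not materially different.
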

\begin{proof}
It is straightforward to obtain
\begin{eqnarray}
\3bar e_h\3bar^2&=&(\nabla_we_h, \nabla_we_h)_{\T_h}\label{eee1}\\
&=&(\nabla_wu-\nabla_wu_h,\nabla_we_h)_{\T_h}\nonumber\\
&=&(\nabla_wQ_hu-\nabla_wu_h,\nabla_we_h)_{\T_h}+(\nabla_wu-\nabla_wQ_hu,\nabla_we_h)_{\T_h}\nonumber\\
&=&(\nabla_we_h,\nabla_w\epsilon_h)_{\T_h}+(\nabla_wu-\nabla_wQ_hu,\nabla_we_h)_{\T_h}.\nonumber
\end{eqnarray}
We will bound each terms in (\ref{eee1}).
Letting $v=\epsilon_h\in V_h^0$ in (\ref{ee})  and using (\ref{mmm1}) and (\ref{eee2}), we have
\begin{eqnarray}
|(\nabla_we_h,\nabla_w\epsilon_h)_{\T_h}|&=&|\ell(u,\epsilon_h)|\nonumber\\
&\le& Ch^{k}|u|_{k+1}\3bar \epsilon_h\3bar\nonumber\\
&\le& Ch^{k}|u|_{k+1}\3bar Q_hu-u_h\3bar\nonumber\\
&\le& Ch^{k}|u|_{k+1}(\3bar Q_hu-u\3bar+\3bar u-u_h\3bar)\nonumber\\
&\le& Ch^{2k}|u|^2_{k+1}+\frac14 \3bare_h\3bar^2.\label{eee3}
\end{eqnarray}
The estimate (\ref{eee2}) implies
\begin{eqnarray}
|(\nabla_wu-\nabla_wQ_hu,\nabla_we_h)_{\T_h}|&\le& C\3bar u-Q_hu\3bar \3bar e_h\3bar\nonumber\\
&\le& Ch^{2k}|u|^2_{k+1}+\frac14\3bar e_h\3bar^2.\label{eee4}
\end{eqnarray}
Combining the estimates (\ref{eee3}) and  (\ref{eee4}) with (\ref{eee1}), we arrive
\[
\3bar e_h\3bar \le Ch^{k}|u|_{k+1},
\]
which completes the proof.
\end{proof}

\section{Error Estimates in $L^2$ Norm}

The standard duality argument is used to obtain $L^2$ error estimate.
Recall $e_h=\{e_0,e_b\}=u-u_h$ and $\epsilon_h=\{\epsilon_0,\epsilon_b\}=Q_hu-u_h$.
The considered dual problem seeks $\Phi\in H_0^1(\Omega)$ satisfying
\begin{eqnarray}
-\Delta\Phi&=& \epsilon_0,\quad
\mbox{in}\;\Omega.\label{dual}
\end{eqnarray}
Assume that the following $H^{2}$-regularity holds
\begin{equation}\label{reg}
\|\Phi\|_2\le C\|\epsilon_0\|.
\end{equation}

\begin{theorem} Let $u_h\in V_h$ be the weak Galerkin finite element solution of (\ref{wg}). Assume that the
exact solution $u\in H^{k+1}(\Omega)$ and (\ref{reg}) holds true.
 Then, there exists a constant $C$ such that
\begin{equation}\label{err2}
\|u-u_0\| \le Ch^{k+1}|u|_{k+1}.
\end{equation}
\end{theorem}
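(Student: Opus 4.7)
\medskip

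\noindent\textbf{Proof proposal.}
The plan is to run a standard Aubin--Nitsche duality argument adapted to the stabilizer-free WG setting. I would start by testing the dual equation (\ref{dual}) against $\epsilon_0$ element by element and integrating by parts: using the continuity of $\Phi$, the single-valuedness of $\epsilon_b$, and $\Phi|_{\partial\Omega}=0$, the jumps of $\nabla\Phi\cdot\bn$ against $\epsilon_b$ cancel, yielding
\begin{equation*}
\|\epsilon_0\|^2 \;=\; (\nabla\Phi,\nabla\epsilon_0)_{\T_h}
  - \langle \nabla\Phi\cdot\bn,\epsilon_0-\epsilon_b\rangle_{\pT_h}.
\end{equation*}
Then I would repeat the manipulation used to derive (\ref{j1}), this time with $\Phi$ and $\epsilon_h$ in place of $u$ and $v$, obtaining the clean duality identity
\begin{equation*}
\|\epsilon_0\|^2 \;=\; (\nabla_w \Phi,\nabla_w \epsilon_h)_{\T_h} - \ell(\Phi,\epsilon_h).
\end{equation*}

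Next I would exploit the error equation (\ref{ee}). Since $\Phi\in H^1_0(\Omega)$, the test function $Q_h\Phi$ lies in $V_h^0$, and using $\epsilon_h=e_h-(u-Q_hu)$ together with the decomposition $\nabla_w\Phi=\nabla_w Q_h\Phi+\nabla_w(\Phi-Q_h\Phi)$, I can rewrite
\begin{equation*}
(\nabla_w \Phi,\nabla_w\epsilon_h)_{\T_h}
 = \ell(u,Q_h\Phi) - (\nabla_w Q_h\Phi,\nabla_w(u-Q_hu))_{\T_h}
   + (\nabla_w(\Phi-Q_h\Phi),\nabla_w\epsilon_h)_{\T_h}.
\end{equation*}
This reduces the proof to bounding four terms: $\ell(u,Q_h\Phi)$, $\ell(\Phi,\epsilon_h)$, and the two interior inner products. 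The last three are routine: Cauchy--Schwarz together with (\ref{eee2}) applied to $\Phi$, the energy bound (\ref{err1}) controlling $\3bar\epsilon_h\3bar\le\3bar e_h\3bar+\3bar u-Q_hu\3bar$, the bound (\ref{mmm1}) applied with $w=\Phi$, and the $H^2$-regularity (\ref{reg}) together deliver contributions of size $Ch^{k+1}|u|_{k+1}\|\epsilon_0\|$.

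The main obstacle, and the step that earns the extra power of $h$, is estimating $\ell(u,Q_h\Phi)$. Here a straightforward use of (\ref{mmm1}) only yields $h^k$, so I would exploit the orthogonality of the $L^2$ projections on edges: since $Q_0\Phi|_e\in P_k(e)$, one has $Q_0\Phi-Q_b\Phi=Q_b(Q_0\Phi-\Phi)$ on each edge $e$, which allows replacement of $(\nabla u-\Q_h\nabla u)\cdot\bn$ by $(\nabla u-\Q_h\nabla u)\cdot\bn - P_e(\nabla u-\Q_h\nabla u)\cdot\bn$ (where $P_e$ is the $L^2$ projection onto $P_k(e)$) without changing the integral, and then the trace and approximation inequalities produce the sharper bound $|\ell(u,Q_h\Phi)|\le Ch^{k+1}|u|_{k+1}|\Phi|_2$. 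Combining all four bounds with (\ref{reg}) and cancelling one factor of $\|\epsilon_0\|$ gives $\|\epsilon_0\|\le Ch^{k+1}|u|_{k+1}$, and the theorem then follows from the triangle inequality
\begin{equation*}
\|u-u_0\| \le \|u-Q_0 u\| + \|Q_0 u-u_0\| \le Ch^{k+1}|u|_{k+1} + \|\epsilon_0\|.
\end{equation*}
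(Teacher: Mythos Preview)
Your duality framework and decomposition are essentially those of the paper, but there is a genuine gap in what you call ``routine.'' The term $(\nabla_w Q_h\Phi,\nabla_w(u-Q_hu))_{\T_h}$ is \emph{not} handled by Cauchy--Schwarz plus (\ref{eee2}): that yields only $\3bar Q_h\Phi\3bar\,\3bar u-Q_hu\3bar\le C\|\Phi\|_2\cdot h^k|u|_{k+1}$, since $\3bar Q_h\Phi\3bar$ is of order $\|\Phi\|_1$, not $h|\Phi|_2$. You are short one power of $h$ here, and none of the tools you list (equations (\ref{eee2}), (\ref{err1}), (\ref{mmm1})) supplies it. The paper faces the equivalent term $I_3=(\nabla_w(Q_hu-u),\nabla_w\Phi)_{\T_h}$ and recovers the missing factor by introducing the elementwise $L^2$ projection $R_h$ onto $[P_1(T)]^d$: since $\nabla\cdot(R_h\bq)\in P_0(T)\subset P_k(T)$ and $(R_h\bq)\cdot\bn|_e\in P_1(e)\subset P_k(e)$, the orthogonality of $Q_0u-u$ and $Q_bu-u$ forces $(\nabla_w(Q_hu-u),R_h\nabla_w\Phi)_T=0$, so one may replace $\nabla_w\Phi$ by $\nabla\Phi-R_h\nabla\Phi$ and gain $\|\nabla\Phi-R_h\nabla\Phi\|\le Ch|\Phi|_2$. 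This orthogonality argument is the actual ``step that earns the extra power of $h$,'' and it is missing from your outline.

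A minor remark: your treatment of $\ell(u,Q_h\Phi)$ is more convoluted than necessary, and the $P_e$ replacement you describe is stated backwards (subtracting $P_e$ from the flux would make the pairing vanish, not leave it unchanged). The paper simply uses $\|Q_0\Phi-Q_b\Phi\|_e=\|Q_b(Q_0\Phi-\Phi)\|_e\le\|Q_0\Phi-\Phi\|_e$ and then the trace inequality plus standard approximation of $\Phi\in H^2$ to get $\big(\sum_T h_T^{-1}\|Q_0\Phi-\Phi\|_\pT^2\big)^{1/2}\le Ch|\Phi|_2$, which combines with the $h^k$ from $\nabla u-\Q_h\nabla u$ to give $h^{k+1}$ directly.
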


\begin{proof}
Testing (\ref{dual}) by $e_0$ and using the fact that $\sum_{T\in\T_h}\langle \nabla
\Phi\cdot\bn, \epsilon_b\rangle_\pT=0$ give
\begin{eqnarray}\nonumber
\|\epsilon_0\|^2&=&-(\Delta\Phi,\epsilon_0)\\
&=&(\nabla \Phi,\ \nabla \epsilon_0)_{\T_h}-\l
\nabla\Phi\cdot\bn,\ \epsilon_0- \epsilon_b\r_{\pT_h}.\label{jw.08}
\end{eqnarray}
Setting $u=\Phi$ and $v=\epsilon_h$ in (\ref{j1}) yields
\begin{eqnarray}
(\nabla\Phi,\;\nabla \epsilon_0)_{\T_h}=(\nabla_w \Phi,\;\nabla_w \epsilon_h)_{\T_h}+\l
\Q_h\nabla\Phi\cdot\bn,\ \epsilon_0-\epsilon_b\r_{\pT_h}.\label{j1-new}
\end{eqnarray}
Substituting (\ref{j1-new}) into (\ref{jw.08}) gives
\begin{eqnarray}
\|\epsilon_0\|^2&=&(\nabla_w \epsilon_h,\ \nabla_w\Phi)_{\T_h}-\l
(\nabla\Phi-\Q_h\nabla\Phi)\cdot\bn,\ \epsilon_0-\epsilon_b\r_{\pT_h}\nonumber\\
&=&(\nabla_w e_h,\ \nabla_w\Phi)_{\T_h}+(\nabla_w (Q_hu-u),\ \nabla_w\Phi)_{\T_h}+\ell(\Phi, \epsilon_h)\nonumber\\
&=&(\nabla_w e_h,\ \nabla_wQ_h\Phi)_{\T_h}+(\nabla_w e_h,\ \nabla_w(\Phi-Q_h\Phi))_{\T_h}\nonumber\\
&+&(\nabla_w (Q_hu-u),\ \nabla_w\Phi)_{\T_h}+\ell(\Phi, \epsilon_h)\nonumber\\
&=&\ell(u,Q_h\Phi)+(\nabla_w e_h,\ \nabla_w(\Phi-Q_h\Phi))_{\T_h}+(\nabla_w (Q_hu-u),\ \nabla_w\Phi)_{\T_h}+\ell(\Phi, \epsilon_h)\nonumber\\
&=&I_1+I_2+I_3+I_4.\label{m2}
\end{eqnarray}

Next we will estimate all the terms on the right hand side of (\ref{m2}). Using the Cauchy-Schwarz inequality, the trace inequality (\ref{trace}) and the definitions of $Q_h$ and $\Pi_h$
we obtain
\begin{eqnarray*}
I_1&=&|\ell(u,Q_h\Phi)|\le\left| \langle (\nabla u-\Q_h\nabla
u)\cdot\bn,\;
Q_0\Phi-Q_b\Phi\rangle_{\pT_h} \right|\\
&\le& \left(\sum_{T\in\T_h}\|(\nabla u-\Q_h\nabla
u)\|^2_\pT\right)^{1/2}
\left(\sum_{T\in\T_h}\|Q_0\Phi-Q_b\Phi\|^2_\pT\right)^{1/2}\nonumber \\
&\le& C\left(\sum_{T\in\T_h}h\|(\nabla u-\Q_h\nabla
u)\|^2_\pT\right)^{1/2}
\left(\sum_{T\in\T_h}h^{-1}\|Q_0\Phi-\Phi\|^2_\pT\right)^{1/2} \nonumber\\
&\le&  Ch^{k+1}|u|_{k+1}|\Phi|_2.\nonumber
\end{eqnarray*}
It follows from (\ref{err1}) and (\ref{eee2}) that
\begin{eqnarray*}
I_2&=&|(\nabla_w e_h,\ \nabla_w(\Phi-Q_h\Phi))_{\T_h}|\le C\3bar e_h\3bar \3bar \Phi-Q_h\Phi\3bar\\
&\le& Ch^{k+1}|u|_{k+1}|\Phi|_2.
\end{eqnarray*}
To bound $I3$, we define a $L^2$ projection element-wise onto $[P_1(T)]^d$ denoted by $R_h$. Then it follows from the definition of weak gradient (\ref{d-d})
\begin{eqnarray*}
(\nabla_w (Q_hu-u),\ R_h\nabla_w\Phi)_{T}&=&-(Q_0u-u,\nabla\cdot R_h\nabla_w\Phi)_T+ \l (Q_bu-u, R_h\nabla_w\Phi\cdot\bn\r_\pT=0
\end{eqnarray*}
Using the equation above and (\ref{eee2}) and the definition of $R_h$, we have
\begin{eqnarray*}
I_3&=&|(\nabla_w (Q_hu-u),\ \nabla_w\Phi)_{\T_h}|\\
&=&|(\nabla_w (Q_hu-u),\ \nabla_w\Phi-R_h\nabla_w\Phi)_{\T_h}|\\
&=&|(\nabla_w (Q_hu-u),\ \nabla\Phi-R_h\nabla\Phi)_{\T_h}|\\
&\le& Ch^{k+1}|u|_{k+1}|\Phi|_2.
\end{eqnarray*}
It follows from (\ref{mmm1}), (\ref{eee2}) and (\ref{err1}) that
\begin{eqnarray*}
I_4&=&|\ell(\Phi,\epsilon_h)|\le Ch|\Phi|_2\3bar \epsilon_h\3bar\\
&\le& Ch|\Phi|_2(\3bar e_h\3bar+\3bar u-Q_hu\3bar)\\
&\le&  Ch^{k+1 }|u|_{k+1}\|\Phi\|_2.
\end{eqnarray*}
Combining all the estimates above
with (\ref{m2}) yields
$$
\|\epsilon_0\|^2 \leq C h^{k+1}|u|_{k+1} \|\Phi\|_2.
$$
It follows from the above inequality and
the regularity assumption (\ref{reg}).
 $$
\|\epsilon_0\|\leq C h^{k+1}|u|_{k+1}.
$$
The triangle inequality implies
$$
\|e_0\|\le \|\epsilon_0\|+\|u-Q_0u\| \leq C h^{k+1}|u|_{k+1}.
$$
We have completed the proof.
\end{proof}

\section{Numerical Experiments}\label{Section:numerical-experiments}

 We solve the following Poisson equation on the unit square:
\begin{align} \label{s1} -\Delta u = 2\pi^2 \sin\pi x\sin \pi y,  \quad (x,y)\in\Omega=(0,1)^2,
\end{align} with the boundary condition $u=0$ on $\partial \Omega$.

\begin{figure}[h!]
 \begin{center} \setlength\unitlength{1.25pt}
\begin{picture}(260,80)(0,0)
  \def\tr{\begin{picture}(20,20)(0,0)\put(0,0){\line(1,0){20}}\put(0,20){\line(1,0){20}}
          \put(0,0){\line(0,1){20}} \put(20,0){\line(0,1){20}}  \put(20,0){\line(-1,1){20}}\end{picture}}
 {\setlength\unitlength{5pt}
 \multiput(0,0)(20,0){1}{\multiput(0,0)(0,20){1}{\tr}}}

  {\setlength\unitlength{2.5pt}
 \multiput(45,0)(20,0){2}{\multiput(0,0)(0,20){2}{\tr}}}

  \multiput(180,0)(20,0){4}{\multiput(0,0)(0,20){4}{\tr}}

 \end{picture}\end{center}
\caption{\label{grid1} The first three levels of grids used in the computation of Table \ref{t1}. }
\end{figure}
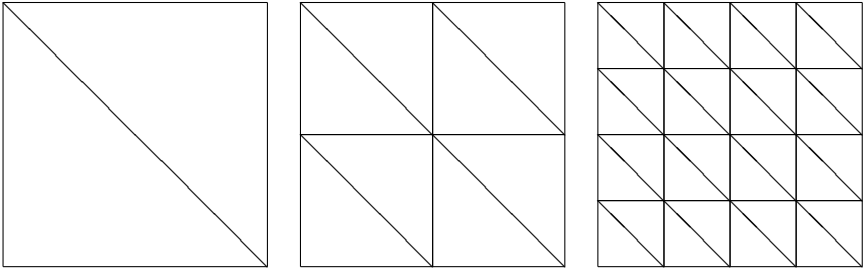

In the first computation, the level one grid consists of two unit right triangles
     cutting from the unit square by a forward
  slash.   The high level grids are the half-size refinements of the previous grid.
The first three levels of grids are plotted in Figure \ref{grid1}.
The error and the order of convergence are shown in Table \ref{t1}.
The numerical results confirm the convergence theory.

\begin{table}[h!]
  \centering \renewcommand{\arraystretch}{1.1}
  \caption{Error profiles and convergence rates for \eqref{s1} on triangular grids }\label{t1}
\begin{tabular}{c|cc|cc}
\hline
level & $\|u_h- Q_0 u\| $  &rate & $\3bar u_h- u\3bar $ &rate \\
\hline
 &\multicolumn{4}{l}{by $P_1$ elements with $P_1^2$ weak gradient $\Rightarrow$ singular} \\ \hline
 &\multicolumn{4}{l}{by $P_1$ elements with $P_2^2$ weak gradient} \\ \hline
 6&   0.4295E-03 & 1.99&   0.5369E-01 & 1.00\\
 7&   0.1075E-03 & 2.00&   0.2684E-01 & 1.00\\
 8&   0.2688E-04 & 2.00&   0.1342E-01 & 1.00\\
 \hline
 &\multicolumn{4}{l}{by $P_2$ elements with $P_2^2$ weak gradient $\Rightarrow$ singular} \\ \hline
 &\multicolumn{4}{l}{by $P_2$ elements with $P_3^2$ weak gradient} \\ \hline
 6&   0.2383E-05 & 3.01&   0.1013E-02 & 2.00\\
 7&   0.2971E-06 & 3.00&   0.2532E-03 & 2.00\\
 8&   0.3709E-07 & 3.00&   0.6330E-04 & 2.00\\
 \hline
 &\multicolumn{4}{l}{by $P_3$ elements with $P_3^2$ weak gradient $\Rightarrow$ singular} \\ \hline
 &\multicolumn{4}{l}{by $P_3$ elements with $P_4^2$ weak gradient} \\ \hline
 6&   0.2468E-07 & 4.02&   0.1430E-04 & 3.00\\
 7&   0.1532E-08 & 4.01&   0.1789E-05 & 3.00\\
 8&   0.9550E-10 & 4.00&   0.2237E-06 & 3.00\\
 \hline
 &\multicolumn{4}{l}{by $P_4$ elements with $P_4^2$ weak gradient $\Rightarrow$ singular} \\ \hline
 &\multicolumn{4}{l}{by $P_4$ elements with $P_5^2$ weak gradient} \\ \hline
 5&   0.8154E-08 & 4.99&   0.2441E-05 & 4.00\\
 6&   0.2551E-09 & 5.00&   0.1526E-06 & 4.00\\
 7&   0.8257E-11 & 4.99&   0.9539E-08 & 4.00\\
 \hline
\end{tabular}%
\end{table}%

In the next computation,  we use a family of polygonal grids (with 12-side polygons)
   shown in Figure \ref{12gon}.
The numerical results in  Table \ref{t3} indicate  that the polynomial degree $j$ for the weak gradient needs to
   be larger, which confirms the theory: $j$ depending on the number of edges of a polygon.
The convergence history confirms the theory.

\begin{figure}[htb]\begin{center}\setlength\unitlength{1.5in}
    \begin{picture}(3.2,1.4)
 \put(0,0){\includegraphics[width=1.5in]{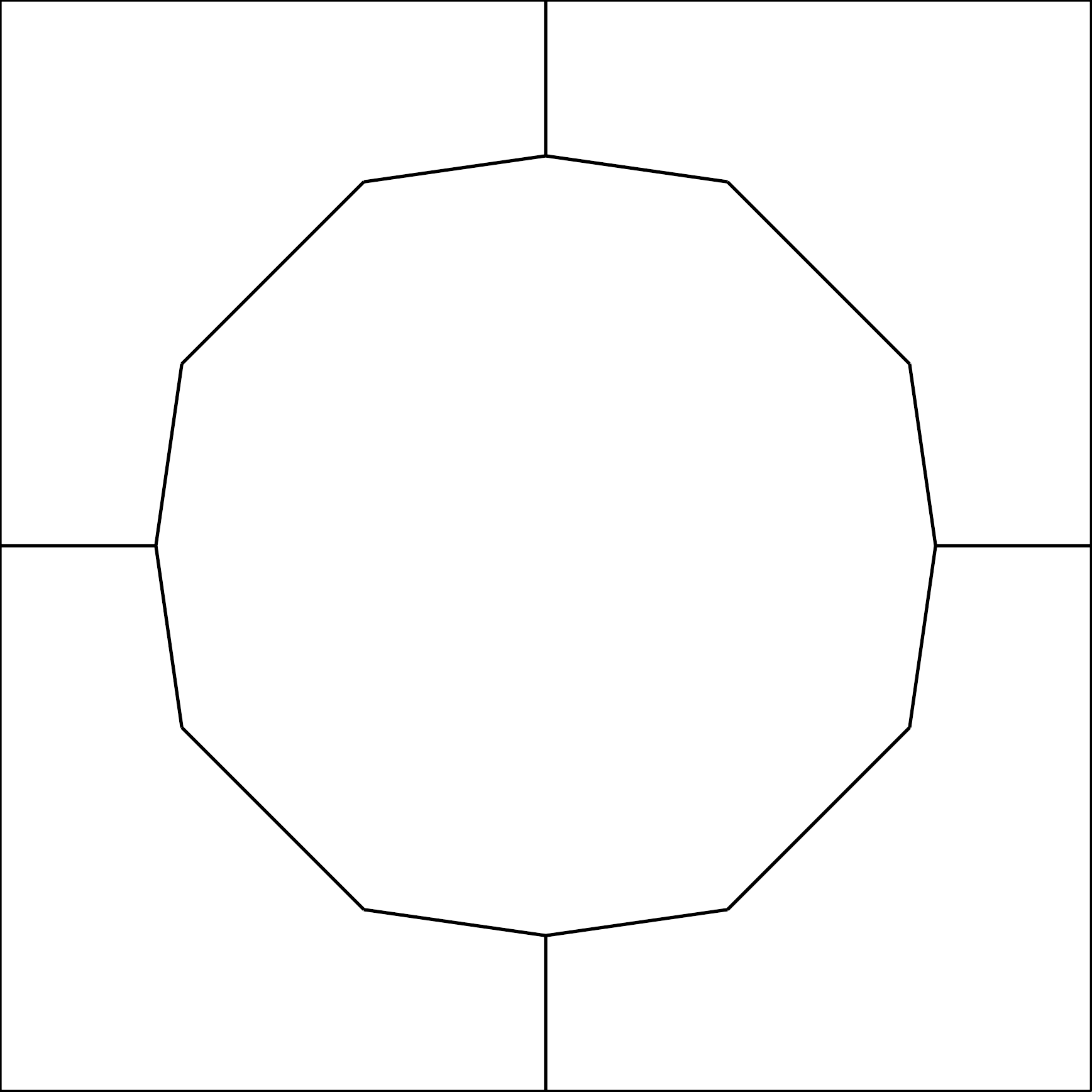}}  
 \put(1.1,0){\includegraphics[width=1.5in]{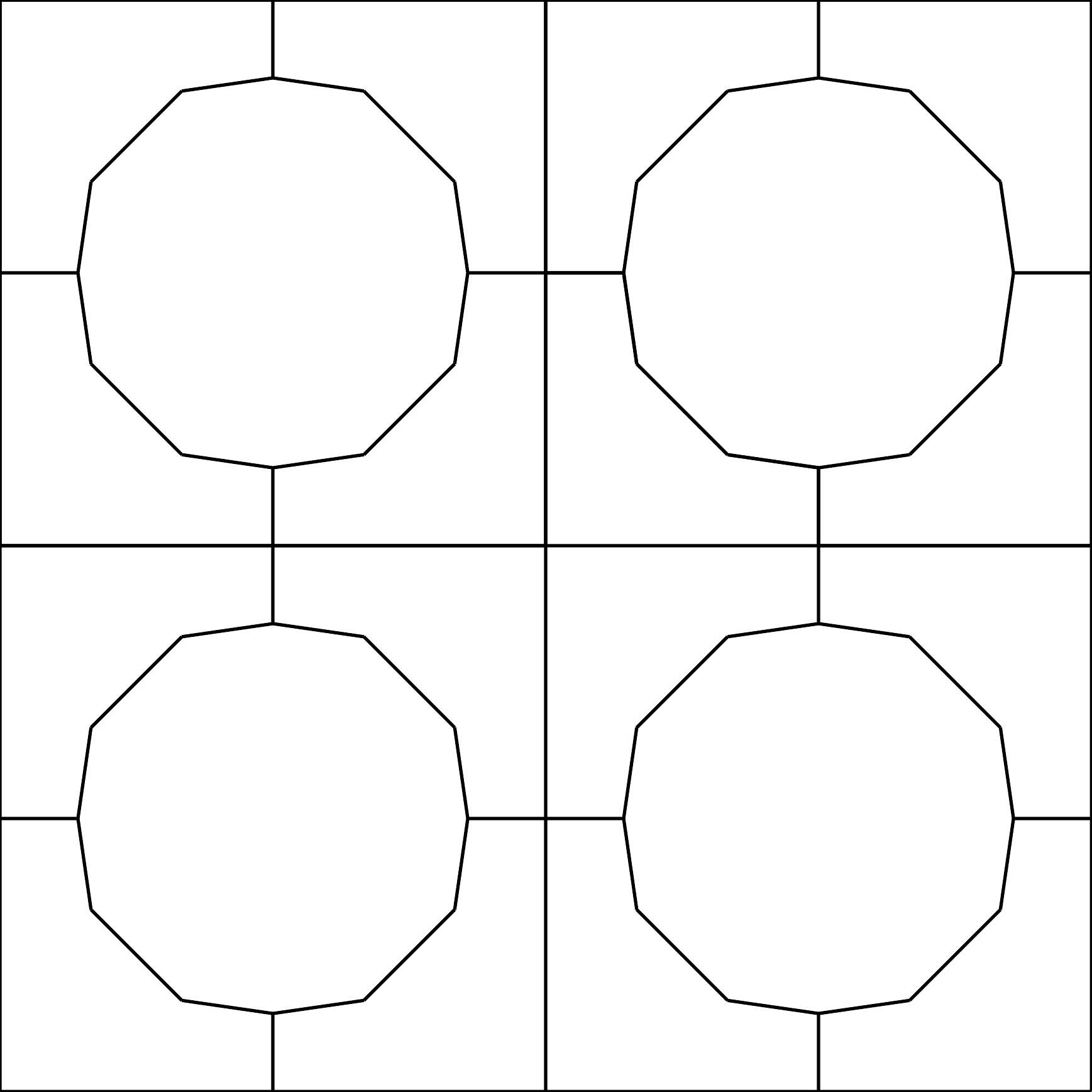}}
 \put(2.2,0){\includegraphics[width=1.5in]{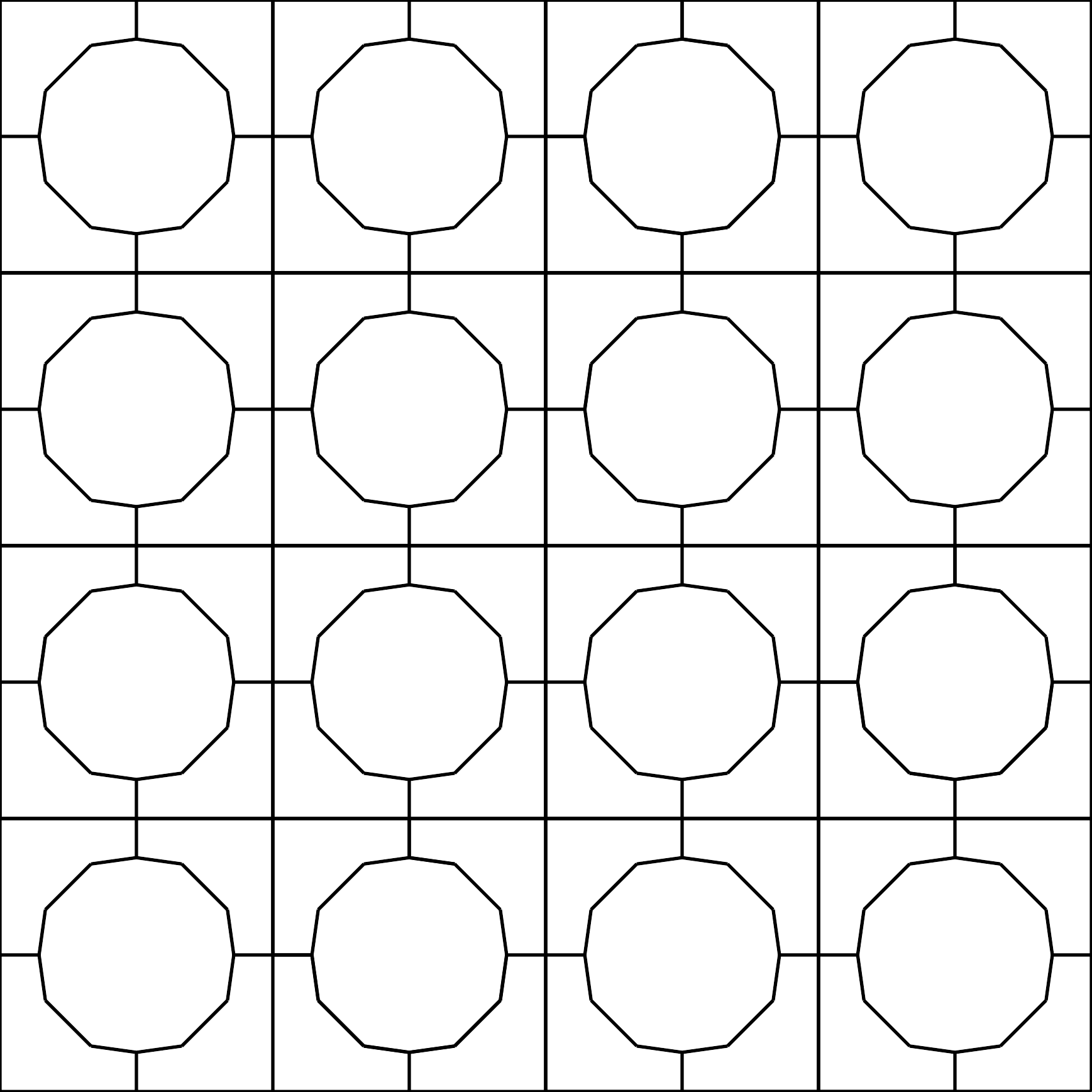}}
    \end{picture}
\caption{ The first three polygonal grids for the computation of Table \ref{t3}.  } \label{12gon}
\end{center}
\end{figure}

\begin{table}[h!]
  \centering \renewcommand{\arraystretch}{1.1}
  \caption{Error profiles and convergence rates for \eqref{s1} on polygonal grids
    shown in Figure \ref{12gon} }\label{t3}
\begin{tabular}{c|cc|cc}
\hline
level & $\|u_h- Q_0u\| $  &rate & $\3bar u_h-u\3bar $ &rate \\
  \hline
  &\multicolumn{4}{l}{by $P_1$ elements with $P_2^2$ weak gradient  $\Rightarrow$ singular} \\ \hline
  &\multicolumn{4}{l}{by $P_1$ elements with $P_3^2$ weak gradient} \\ \hline
 5&   0.9671E-03 & 1.98&   0.1350E+00 & 1.00 \\
 6&   0.2425E-03 & 2.00&   0.6750E-01 & 1.00 \\
 7&   0.6067E-04 & 2.00&   0.3375E-01 & 1.00 \\

  \hline
  &\multicolumn{4}{l}{by $P_2$ elements with $P_3^2$ weak gradient  $\Rightarrow$ singular} \\ \hline
  &\multicolumn{4}{l}{by $P_2$ elements with $P_4^2$ weak gradient} \\ \hline
 5&   0.5791E-05 & 3.00&   0.3247E-02 & 2.00 \\
 6&   0.7233E-06 & 3.00&   0.8120E-03 & 2.00 \\
 7&   0.9040E-07 & 3.00&   0.2030E-03 & 2.00 \\
 \hline
  &\multicolumn{4}{l}{by $P_3$ elements with $P_4^2$ weak gradient  $\Rightarrow$ singular} \\ \hline
  &\multicolumn{4}{l}{by $P_3$ elements with $P_5^2$ weak gradient} \\ \hline
 4&   0.8809E-06 & 4.00&   0.3575E-03 & 2.99 \\
 5&   0.5509E-07 & 4.00&   0.4475E-04 & 3.00 \\
 6&   0.3447E-08 & 4.00&   0.5595E-05 & 3.00 \\
\hline
    \end{tabular}%
\end{table}%

\end{document}